\newtheorem{theorem}{Theorem}[section]
\newtheorem*{theorem*}{Theorem}
\newtheorem{proposition}[theorem]{Proposition}
\newtheorem*{proposition*}{Proposition}
\newtheorem*{corollary*}{Corollary}
\newtheorem{lemma}[theorem]{Lemma}
\newtheorem*{definition*}{Definition}
\newcommand{\NN}{\mathbb N}
\newcommand{\ZZ}{\mathbb Z}
\newcommand{\QQ}{\mathbb Q}
\newcommand{\CC}{\mathbb C}
\newcommand{\PP}{\mathbb P}
\newcommand{\gf}{\mathfrak{g}}
\newcommand{\Gr}{\mathcal{G}r}
\newcommand{\Oc}{\mathcal O}
\newcommand{\Kc}{\mathcal K}
\newcommand{\Pc}{\mathcal P}
\newcommand{\Fc}{\mathcal F}
\newcommand{\Zc}{\mathcal Z}
\newcommand{\Sc}{\mathcal S}
\newcommand{\Ls}{\mathscr L}
\newcommand{\IC}{\mathbf{IC}}
\newcommand{\gl}{\mathfrak{gl}}
\newcommand{\glinf}{\mathfrak{gl}_{\infty}^+}
\newcommand{\lam}{\lambda}
\newcommand{\eps}{\varepsilon}
\newcommand{\sgn}{\text{sgn}}
\newcommand{\Rep}{\operatorname{Rep}}
\newcommand{\Sym}{\operatorname{Sym}}
\newcommand{\End}{\operatorname{End}}
\newcommand{\Mod}{\operatorname{Mod}}
\newcommand{\xs}{\underline{x}}
\newcommand{\ts}{\underline{t}}
\title{Equivariant Schubert calculus and geometric Satake}
\author{Antoine Labelle}
\address{Department of Mathematics and Statistics \\ McGill University}
\email{antoine.labelle@mail.mcgill.ca}
\begin{document}

\maketitle

\begin{abstract}
    The main classical result of Schubert calculus is that multiplication rules for the basis of Schubert cycles inside the cohomology ring of the Grassmannian $G(n,m)$ are the same as multiplication rules for the basis of Schur polynomials in the ring of symmetric polynomials. In this paper, we explain how to recover this somewhat mysterious connection by using the geometric Satake correspondence to put the structure of a representation of $GL_m$ on $H^\bullet(G(n,m))$ and comparing it to the Fock space representation on symmetric polynomials.  This proof also extends to equivariant Schubert calculus, and gives an explanation of the relationship between torus-equivariant cohomology of Grassmannians and double Schur polynomials.
\end{abstract}

\section{Introduction}

The cohomology ring of the Grassmannian $G(n,m)$ of $n$-dimensional subspaces of $\CC^m$ has a natural basis given by Schubert cycles $\sigma_\lam$, which comes from a cell decomposition of $G(n,m)$ into Schubert cells. Schubert calculus on Grassmannians is the problem of understanding the multiplication in the cohomology ring with respect to this basis, i.e. to calculate the structure constants $c_{\lam \mu}^\nu$ in the expansion
\begin{equation*} \label{eq:structure-consts}
    \sigma_\lam \cdot \sigma_\mu = \sum_\nu c_{\lam \mu}^\nu \sigma_\nu.
\end{equation*}
It is well-known that these constants can be identified as the Littlewood-Richardson coefficients, which also describe the multiplication of symmetric polynomials in the basis of Schur polynomials. In other words, there is a ring isomorphism from a certain quotient of the ring of symmetric polynomials in $n$ variables to $H^\bullet(G(n,m))$, which sends Schur polynomials to Schubert cycles (an observation first due to Lascoux). The classical proof of this result goes by first considering the case of multiplication by special Schubert cycles, described by Pieri's rule, and then expressing a general Schubert cycle in terms of special ones via Giambelli's formula, which mirrors the Jacobi-Trudi formula for Schur polynomials \cite[\S 14.7]{Ful}. This unfortunately does not give a very conceptual explanation for the existence of the isomorphism. 

In this note, we give an alternative explanation, by showing how this ring isomorphism can be obtained from an isomorphism of $GL_m$-representations between the two sides. Indeed, $H^\bullet(G(n,m))$, can be given the structure of a $GL_m$ representation via the celebrated geometric Satake correspondence. It is known from the general theory to be isomorphic to the highest weight representation of weight $\omega_n= (\underbrace{1, \ldots, 1}_{n}, 0, \ldots, 0)$, which is the wedge power $\bigwedge^n \ZZ^m$ \footnote{Note that we could guess the isomorphism $H^\bullet(G(n,m)) \cong \bigwedge^n \ZZ^m$ as $\ZZ$-modules without knowing the geometric Satake correspondence, as both sides are free of rank $\binom{m}{n}$. One can therefore just match bases and transfer the representation structure on $\bigwedge^n \ZZ^m$ to make $H^\bullet(G(n,m))$ into a representation of $GL_n$, as in \cite{GSal}. The power of the Satake correspondence is to give a conceptual geometric definition of this representation structure on $H^\bullet(G(n,m))$.}.
On the other hand, there is also the well-known Fock space representation of $\gl_\infty$ on the ring of symmetric functions \cite{Tin}, which becomes a representation of $GL_m$ after taking a suitable quotient and is also isomorphic to the wedge representation. It is then possible to show that this representation isomorphism is in fact a ring isomorphism by looking at the action of a certain subalgebra of $\gl_m$ which is related to the cohomology ring of the affine Grassmannian. It also follows immediately that, up to a scalar, the isomorphism takes Schubert cycles to Schur polynomials, since both are weight bases for their respective representations. If we work over $\ZZ$, this scalar is just a sign $\pm 1$. Showing that the sign is in fact always positive is a somewhat delicate issue. To do this, we need as a geometric input the fact that the structure constants for multiplication of Schubert classes are nonnegative.

Moreover, this whole story generalizes perfectly well to $T$-equivariant cohomology, where $T\subset GL_m$ is the torus of diagonal matrices, given that we work over the ring $H_T^\bullet(\text{pt})=\ZZ[t_1, t_2, \ldots, t_m]$. Schur polynomials, in that case, get replaced by so-called \emph{double Schur polynomials}, which are symmetric polynomials with coefficients in $\ZZ[t_1, t_2, \ldots]$. Therefore, we work equivariantly in the rest of the paper for the sake of generality, but the non-equivariant case can be recovered immediately by setting $t_i=0$ for $i=1, \ldots, m$.  

The goal of this paper is partly expository, as the idea of studying Schubert calculus with the help of the geometric Satake action of $GL_m$ already appears in the litterature. We refer the reader in particular to \cite{AN}, which investigates in details the relationship between Schubert calculus and the geometric Satake correspondence, including applications to Schubert calculus on orthogonal Grassmannians and quantum Schubert calculus. The idea of putting a representation structure on the cohomology ring of the Grassmannian also appears in earlier papers such as \cite{Gat, GSal, GSan, Lak2, Lak, LT2, LT} and applications of these ideas to the representation theory of the Lie algebra of infinite size matrices can be found in \cite{BCGM}.

In Section \ref{sec:geometric-satake}, we recall the geometric Satake correspondence of Mirkovic and Vilonen and explain how it defines a representation structure on $H^\bullet(G(n,m))$. Then, in Section \ref{sec:equiv-coh}, we upgrade this to a representation structure on $H_T^\bullet(G(n,m))$ by applying the correspondence with coefficients in $H_T^\bullet(\text{pt})$. In Section \ref{sec:double-schur}, we put a representation structure on the ring of symmetric polynomials and define double Schur polynomials, which form the weight basis of this representation. We also define a certain quotient of the ring of symmetric polynomials, depending on $m$. In Section \ref{sec:rep-iso}, we note that this quotient is isomorphic, as a representation of $GL_m$ over $H_T^\bullet(\text{pt})$, to $H_T^\bullet(G(n,m))$. We then explain in Section \ref{sec:ring-struct} how to upgrade this representation isomorphism into a ring isomorphism, by studying the action of the equivariant cohomology ring of the affine Grassmannian. Finally, in Section \ref{sec:sgn}, we tackle the problem of showing that the sign by which Schubert cycles and double Schur polynomials differ is in fact always $1$, by using an important positivity result due to Graham \cite{Graham} for multiplication of equivariant Schubert classes.

\section{The geometric Satake correspondence} \label{sec:geometric-satake}

In this section, we briefly summarize the geometric Satake correspondence of Mirkovic and Vilonen \cite{MV}. For a detailed exposition of the topic, see \cite{BR}.

\subsection{The affine Grassmannian}

Let $G$ be a complex reductive group and $T\subset B$ a maximal torus and Borel subgroup in $G$. Let $N$ be the unipotent radical of $B$. Set $\Kc = \CC(\!(t)\!)$ and $\Oc = \CC[\![t]\!]$.

The affine Grassmannian $\Gr$ is an ind-variety whose set of $\CC$-points is the quotient $G(\Kc)/G(\Oc)$. Any coweight $\mu \in X_*(T)$ determines an element of $G(\Kc)$, hence a point of $\Gr$, denoted $L_\mu$. There is a stratification by $G(\Oc)$ orbits
\[\Gr = \bigsqcup_{\mu \in X^+_*(T)} \Gr^\mu \qquad \text{where} \qquad \Gr^\mu = G(\Oc) \cdot L_\mu\]
 and $X^+_*(T)$ is the set of dominant coweights. Moreover, we have $\overline{\Gr^\mu} = \bigsqcup_{\nu \le \mu} \Gr^\nu$. 
 Let $\Pc$ be the category of $G(\Oc)$-equivariant perverse sheaves on $\Gr$ with coefficients in $\ZZ$ (by \cite[Proposition 2.1]{MV}, this is equivalent under the forgetful functor to the category of perverse sheaves constructible with respect to the stratification above). For each $\mu \in X^+_*(T)$, there is a perverse sheaf $\IC_\mu$, supported on $\overline{\Gr^\mu}$, called the \emph{intersection cohomology sheaf}\footnote{If we take coefficients in a field $k$ rather than $\ZZ$, these intersection cohomology sheaves are exactly the simple objects in the category $\Pc(\Gr, k)$}. If $\mu$ is minimal with respect to the partial order on $X_*(T)$, so that $\overline{\Gr^\mu}=\Gr^\mu$, then this is simply the shifted constant sheaf $\underline{\ZZ}_{\Gr^\mu}[\dim \Gr^\mu]$. 

 Other important subspaces of $\Gr$ are the semi-infinite orbits $S_\mu=N(\Kc)\cdot L_\mu$, which are indexed by (not necessarily dominant) coweights $\mu$. We have 
$\Gr = \bigsqcup_{\mu \in X_*(T)} S_\mu$ and $\overline{S_\mu} = \bigsqcup_{\nu \le \mu} S_\nu$.

\subsection{The correspondence}
 
 There is a notion of convolution for perverse sheaves on $\Gr$, which makes $\Pc$ into a monoidal category. Hypercohomology defines a tensor functor $H^\bullet : \Pc \to \Mod_\ZZ$ to the category of finitely generated $\ZZ$-modules. By the Tannakian formalism, there is an equivalence of monoidal categories
    \[ \Pc \cong \Rep_\ZZ(G^\vee), \]
 which commutes with the natural functors to $\Mod_\ZZ$ on both sides, where $G^\vee$ is an algebraic group over $\ZZ$ such that $G^\vee(R)$ is the group of tensor automorphisms of the functor $H^\bullet(-)\otimes R : \Pc \to \Mod_R$ for any ring $R$. Moreover, $G^\vee$ can actually be identified with the Langlands dual group of $G$ over $\ZZ$, i.e. the unique split reductive group over $\ZZ$ whose root datum is dual to the root datum of $G$, and the equivalence sends $\IC_\mu$ to the Schur module of highest weight $\mu$ \cite[Proposition 13.1]{MV}.

Moreover, the fiber functor $H^\bullet(-)$ factors through $X_*(T)$-graded $\ZZ$-modules via the natural isomorphism
\[H^\bullet(-) = \bigoplus_{\mu \in X_*(T)} H_c^\bullet(S_\mu, -)\]
and $H_c^\bullet(S_\mu, -)$ is concentrated in degree $2\langle \rho, \mu\rangle$, where $\rho$ is the Weyl vector (half the sum of the positive roots). Then there is a canonical maximal torus $T^\vee \subset G^\vee$ which acts by $\mu$ on the degree $\mu$ part of $H^\bullet(\Fc)$ for any perverse sheaf $\Fc$.

\subsection{The case of $GL_m$}

We will be mainly interested in the special case $G = GL_m(\CC)$, where we take $T$ to be the standard torus of diagonal matrices and $B$ the subgroup of upper triangular matrices. In this case, there is a concrete interpretation of points of $\Gr$ as $\Oc$-lattices inside $\Kc^m$. Indeed, there is a transitive action of $GL_m(\Kc)$ on such lattices, and the stabilizer of the standard lattice $L_0=\Oc^m$ is exactly $GL_m(\Oc)$. 

By the theory of Smith normal form over PIDs \cite[Chapter 3]{Jacobson}, for any lattice $L$ there exists a basis $b_1, \ldots, b_m$ of $L_0$ and integers $\mu_1 \ge \ldots \ge \mu_m$ such that $t^{\mu_1} b_1, \ldots, t^{\mu_m} b_m$ form a basis for $L$. The tuple $(\mu_1, \ldots, \mu_m)$ is uniquely determined and is called the \emph{type} of $L$. Then, for every $\mu \in X^+_*(T)$, $\Gr^\mu$ is exactly the set of lattices of type $\mu$. 

If $\mu$ is the fundamental weight $\omega_n= (\underbrace{1, \ldots, 1}_{n}, 0, \ldots, 0)$, then lattices of type $\mu$ can be identified with $n$-dimensional quotients of $L_0/tL_0 \cong \CC^m$, so $\Gr^{\omega_n}=\overline{\Gr^{\omega_n}}=G(n,m)$ is the usual Grassmannian. In that case, $\IC_{\omega_n}$ is the shifted constant sheaf $\underline{\ZZ}_{G(n,m)}[\dim G(n,m)]$, so $H^\bullet(\Gr, \IC_{\omega_n}) = H^\bullet(G(n,m))$ (the classical singular cohomology) up to shift. 

The semi-infinite orbits that intersect $\Gr^{\omega_n}$ are exactly $S_\mu$ for $\mu$ a sequence of zeros and ones containing exactly $n$ ones. In that case, $S_\mu \cap \Gr^{\omega_n}$ is the $N(\CC)$-orbit of $L_\mu$, which is the opposite Schubert cell $\Omega_\mu$ indexed by $\mu$\footnote{This is an opposite Schubert cell rather than a Schubert cell because we identify $\Gr^{\omega_n}$ with the Grassmannian of $n$-dimensional \emph{quotients} of $L_0/tL_0$, or equivalently $n$-dimensional subspaces of $(L_0/tL_0)^*$. The passage to dual space makes $GL_m(\CC)$ act on $G(n,m)$ under this identification not in the usual way, but via the inverse transpose matrix.}.

The root system of $GL_m$ is self-dual, so $G^\vee$ is simply $GL_m$ over $\ZZ$. The geometric Satake correspondence then identifies $H^\bullet(G(n,m))$ with the Schur module of highest weight $\omega_n$ for $GL_m/\ZZ$, which is the wedge representation $\bigwedge^n \ZZ^m$. The identification of $G^\vee$ with $GL_m$ is not uniquely determined, but we can fix one by requiring that the isomorphism between $\ZZ^m$ and $H^\bullet(G(1,m))=H^\bullet(\PP^{m-1})$ sends the $k^\text{th}$ standard basis vector to the class of $\overline{\Omega_{e_k}}=\PP^{m-k}$, where $e_k=(0, \ldots, 1, \ldots, 0)$ with the one in position $k$.

\section{Equivariant cohomology} \label{sec:equiv-coh}

Let $R_T=H_T^\bullet(pt)=\Sym X^*(T)$ denote the $T$-equivariant cohomology of a point. In \cite{YZ}, it is proven that there is a natural isomorphism of functors $\Pc \to \Mod_{R_T}$ between $H^\bullet_T(-)$ and $H^\bullet(-)\otimes R_T$. This isomorphism comes from the decompositions

\[H^\bullet(-) = \bigoplus_{\mu \in X_*(T)} H_c^\bullet(S_\mu, -)\]
and
\[H_T^\bullet(-) = \bigoplus_{\mu \in X_*(T)} H_{T,c}^\bullet(S_\mu, -),\]
and canonical isomorphisms $H_{T,c}^\bullet(S_\mu, -) =H_c^\bullet(S_\mu, -)\otimes R_T$ due to the fact that $H_c^\bullet(S_\mu, -)$ is concentrated in one degree. Therefore, the group of tensor automorphisms of the functor $H_T^\bullet(-)$ can be identitifed by the Tannakian formalism with $G^\vee(R_T)$ and the equivariant cohomology of any perverse sheaf acquires the structure of a representation of $G^\vee$ over $R_T$.

Now, and for the rest of this note, take $G=GL_m(\CC)$ and $T$ the standard maximal torus. In this case, $R_T$ is the polynomial ring $\ZZ[t_1, \ldots, t_m]$. By the discussion above, the $R_T$-module $V:=H_T^\bullet(G(1,m))=H_T^\bullet(\PP^{m-1})$ gets identified with the standard representation of $GL_m(R_T)$, where the $k^{th}$ basis vector correspond to the class of $\overline{\Omega_{e_k}}=\PP^{m-k}$. It is a standard fact that $H_T^\bullet(\PP^{m-1})$ is isomorphic, as an $R_T$-algebra, to $R_T[x]/\prod_{i=1}^m (x+t_i)$ where $x=c_1^T(\Oc(1))$ and that, under this isomorphism, the class of $\overline{\Omega_{e_k}}$ correspond to $\prod_{i=1}^{k-1} (x+t_i)$ \cite[Chapter 4, Example 7.4]{AF}. To avoid a choice of basis, we can think of $G^\vee_{R_T}$ as the automorphism group of the free $R_T$ module $V$. Then we have an isomorphism of $G^\vee_{R_T}$ representations

\begin{equation} \label{eq:cohomology-eq-wedge}
    H_T^\bullet(G(n,m)) = \bigwedge\nolimits^n V.
\end{equation}

\section{Symmetric functions and double Schur polynomials} \label{sec:double-schur}

Let $R=\ZZ[t_1, t_2, \ldots]$ be a polynomial ring in infinitely many variables. Denote by $\Lambda_n, \Lambda_n^\sgn\subset R[x_1, \ldots, x_n]$ the $R$-modules of symmetric and skew-symmetric polynomials in $n$ variables with coefficients in $R$, respectively. 
Note that $\Lambda_n^{sgn}$ can be identified with $\bigwedge^n R[x]$ via
\[f_1(x) \wedge \cdots \wedge f_n(x) \mapsto \sum_{\sigma \in S_n} \sgn(\sigma) f_{\sigma(1)}(x_1)\cdots f_{\sigma(n)}(x_n).\]

The "double monomials" $(x|\ts)^k:=(x+t_1)\cdots(x+t_k)$ form an $R$-basis of $R[x]$ as $k$ runs over nonnegative integers. This gives an induced basis $(a_\nu(\xs|\ts))$ of $\Lambda_n^\sgn$ indexed by strictly decreasing sequences $\nu_1> \ldots> \nu_n$ of nonnegative integers, where
\[ a_\nu(\xs|\ts)  = \sum_{\sigma \in S_n} \sgn(\sigma) \prod_{i=1}^n (x_{\sigma(i)}|\ts)^{\nu_i}\]
Let $\rho = (n-1, n-2, \ldots, 1, 0)$. We can interpret the definition of $a_\rho$ as the determinant $\det((x_i|\ts)^{n-j})_{i,j=1}^n$ and then do row operations to reduce it to the usual Vandermonde determinant $\det(x_i^{n-j})_{i,j=1}^n = \prod_{1\le i<j \le n} (x_i-x_j)$. Therefore we have

\[a_\rho(\xs| \ts) = \prod_{1\le i<j \le n} (x_i-x_j).\]

\begin{proposition} \label{prop:vandermonde-mul-iso}
    Multiplication by $a_\rho(\xs|\ts)$ defines an isomorphism of $R$-modules $\Lambda_n \to \Lambda_n^{sgn}$.
\end{proposition}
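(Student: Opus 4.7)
My plan is to verify the three standard claims: the map $f \mapsto a_\rho(\xs|\ts) \cdot f$ lands in $\Lambda_n^{\sgn}$, it is injective, and it is surjective. Well-definedness is immediate, since the explicit formula $a_\rho(\xs|\ts) = \prod_{i<j}(x_i - x_j)$ exhibits $a_\rho$ as skew-symmetric, and the product of a skew-symmetric polynomial with a symmetric one is skew-symmetric. Injectivity is also easy: $R[x_1, \ldots, x_n]$ is a polynomial ring over the integral domain $R = \ZZ[t_1, t_2, \ldots]$, so it is itself an integral domain, and $a_\rho \neq 0$, so multiplication by $a_\rho$ is injective on the ambient ring and a fortiori on the submodule $\Lambda_n$.

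The content lies in surjectivity, for which I need to show that every skew-symmetric polynomial $g \in \Lambda_n^{\sgn}$ is divisible by $a_\rho$ inside $R[x_1, \ldots, x_n]$, with symmetric quotient. The standard route is to establish divisibility factor by factor. Fix $i < j$. Swapping $x_i$ and $x_j$ sends $g$ to $-g$, so after setting $x_i = x_j$ the polynomial $g|_{x_i = x_j}$ equals its own negative, i.e.\ $2 g|_{x_i = x_j} = 0$ in $R[x_1, \ldots, \hat{x_i}, \ldots, x_n]$. Since this latter ring is a polynomial ring over $\ZZ$ and hence a torsion-free abelian group, this forces $g|_{x_i = x_j} = 0$, so $(x_i - x_j) \mid g$. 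Because $R[x_1, \ldots, x_n]$ is a UFD (being a polynomial ring over $\ZZ$) and the elements $x_i - x_j$ for $1 \le i < j \le n$ are pairwise non-associate primes, their product $a_\rho(\xs|\ts)$ divides $g$. Writing $g = a_\rho(\xs|\ts) \cdot h$, the equality $\sigma \cdot g = \sgn(\sigma)\, g$ combined with $\sigma \cdot a_\rho(\xs|\ts) = \sgn(\sigma)\, a_\rho(\xs|\ts)$ and cancellation in the integral domain gives $\sigma \cdot h = h$, so $h \in \Lambda_n$, completing the proof.

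The only mildly subtle step is the passage from $2 g|_{x_i = x_j} = 0$ to $g|_{x_i = x_j} = 0$ over $\ZZ$-coefficients, which is where the usual characteristic-zero argument needs to be replaced by the observation that the ambient ring is torsion-free. Everything else is a direct application of unique factorization in $R[x_1, \ldots, x_n]$, and the fact that $a_\rho(\xs|\ts)$ depends only on the $x_i$'s (not on $\ts$) means that the twisted double-monomial basis is irrelevant to this particular statement; it plays no role beyond providing the explicit formula for $a_\rho(\xs|\ts)$.
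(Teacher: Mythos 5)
Your proof is correct and follows essentially the same route as the paper: well-definedness and injectivity are immediate, and surjectivity comes from showing each skew-symmetric polynomial vanishes on $x_i = x_j$ and is therefore divisible by the Vandermonde product. You simply fill in the details the paper leaves implicit (torsion-freeness to pass from $2g|_{x_i=x_j}=0$ to $g|_{x_i=x_j}=0$ over $\ZZ$, unique factorization to combine the pairwise coprime factors, and the check that the quotient is symmetric), all of which are correctly handled.
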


\begin{proof}
    Since the product of a skew-symmetric polynomial with a symmetric polynomial is clearly skew-symmetric, multiplication by $a_\rho$ does send $\Lambda_n$ to $\Lambda_n^{sgn}$, and the map is injective since $R[x_1,\ldots, x_n]$ is an integral domain. Moreover, every skew-symmetric polynomial vanishes whenever two variables are equal, hence is divisible by $x_i-x_j$ for all $i<j$, hence is divisible by $a_\rho$. This shows the surjectivity.
\end{proof}

By the proposition above, we get an induced basis $(s_\lam(\xs| \ts))$ of $\Lambda_n$ indexed by partitions $\lam_1 \ge \ldots \ge \lam_n\ge 0$, where

\[s_\lam(\xs| \ts) = \frac{a_{\lam+\rho}(\xs|\ts)}{a_\rho(\xs| \ts)}.\]
These polynomials are called \emph{double Schur polynomials} (or, in some sources, \emph{factorial Schur polynomials}) \cite{Mihalcea} \footnote{Our definition differ slightly from the usual convention for double Schur polynomials, which replaces $t_i$ by $-t_i$ in the definition of double monomials.}.

Let $\glinf(R)$ be the Lie algebra of $\NN \times \NN$ matrices over $R$ with finitely many nonzero entries. It acts on the free $R$-module with basis indexed by $\NN$, which we can identify with $R[x]$ via the basis $((x|\ts)^k)_{k\in \NN}$. Hence it also acts on $\Lambda_n$ through the identification
\[\Lambda_n \underset{\sim}{\overset{a_\rho}{\longrightarrow}}\Lambda_n^\sgn \cong \bigwedge\nolimits^n R[x].\]
This is an equivariant version of the  so-called \emph{Fock space} representation of $\glinf(\CC)$ on the algebra of symmetric functions\footnote{The term Fock space actually usually refers to the representation of the Lie algebra $\gl_\infty$ of $\ZZ \times\ZZ$ matrices on symmetric functions with infinitely many variables, which is obtained as a limit of our representation as $n \to \infty$}. It is clear from the construction that the double Schur polynomials are then a weight basis of this representation for the action of the subalgebra of diagonal matrices (and this uniquely characterize double Schur polynomials up to sign, because all weights have multiplicity one).

\subsection{Truncated versions} \label{sec:truncated}

Let $J_m$ be the ideal of $R[x_1, \ldots, x_n]$ generated by $t_{m+1}, t_{m+2}, \ldots$ and $(x_i|\ts)^m$ for $i=1, \ldots, n$, and $I_m^\sgn= J_m \cap \Lambda_n^\sgn$. As an $R$-submodule of $\Lambda_n^\sgn$, $I_m^\sgn$ is clearly generated by $t_{m+1}, t_{m+2}, \ldots$ and the basis elements $a_\nu$ for strict partitions $\nu$ with $\nu_1\ge m$. 

Note that $I_m^\sgn$ is in fact a sub-$\Lambda_n$-module of $\Lambda_n^\sgn$ (since $J_m$ is an ideal). Hence, if we let $I_m$ be the preimage of $I_m^\sgn$ under the "multiplication by $a_\rho$" isomorphism $\Lambda_n \overset{\sim}{\to} \Lambda_n^\sgn$, then $I_m$ is an ideal of $\Lambda_n$. Note that $I_m$ is generated as an $R$-module by $t_{m+1}, t_{m+2}, \ldots$ and the basis elements $s_\lam$ for partitions $\lam$ with $\lam_1> m-n$. 

Define $\Lambda_{n,m}^\sgn = \Lambda_n^\sgn/I_m^\sgn$ and $\Lambda_{n,m} = \Lambda_n/I_m$. Identifying $R_T$ with $R/(t_{m+1}, t_{m+2}, \ldots)= \ZZ[t_1, \ldots, t_m]$, we see that $\Lambda_{n,m}^\sgn$ $\Lambda_{n,m}$ have the structure of $R_T$-modules, and multiplication by $a_\rho$ defines an isomorphism between them.

The map $R[x]\to V=R_T[x]/(x|\ts)^m$ which quotients by $t_{m+1}, t_{m+2}, \ldots$ and by $(x|\ts)^m$ induces a surjection

\begin{equation}
\Lambda_n^\sgn = \bigwedge\nolimits^n R[x] \twoheadrightarrow \bigwedge\nolimits^n V
\end{equation}

which has kernel $I_m^\sgn$, so we get an identification of $\Lambda_{n,m}^\sgn$ with $\bigwedge\nolimits^n V$ and $\Lambda_{n,m}^\sgn$ gets the structure of a representation of $G^\vee_{R_T}=\underline{\operatorname{Aut}}_{R_T}(V)$. We can also transfer this representation structure to $\Lambda_{n,m}$ via the "multiplication by $a_\rho$" isomorphism. Then the quotient map 

\[\Lambda_n \twoheadrightarrow \Lambda_{m,n}\]

is compatible with the representation structure on both sides (in the sense that it is $\gl_m(R)$ equivariant after making both sides into representations of $\gl_m(R)$ via the quotient map $\gl_m(R) \to \gl_m(R_T) \cong \operatorname{Lie}(G^\vee_{R_T})$ and the inclusion $\gl_m(R)\hookrightarrow \glinf(R)$).

\section{The representation isomorphism} \label{sec:rep-iso}

We have put the structure of a representation of $G^\vee_{R_T}$ on $H_T^\bullet(G(n,m))$ and $\Lambda_{n,m}$, and saw that both are isomorphic to the $n^{th}$ wedge representation. We therefore have an isomorphism

\[\Phi : \Lambda_{n,m} \to H_T^\bullet(G(n,m))\]
of $G^\vee_{R_T}$-representations. 

There is a bijection between partitions that fit in an $n$ by $m-n$ rectangle and coweights $\mu \in X_*(T)$ having $n$ ones and zeros everywhere else, which sends a partition $\lam$ to the coweight $\mu$ with ones in positions $\lam_1 + n, \lam_2 +n-1, \ldots, \lam_n+1$. Since $(x|\ts)^k$ span the weight $e_{k+1}$ subspace of $V$, we see that the weight $\mu$ subspace of $\Lambda_{n,m}$ is spanned by $s_\lam$, where $\lam$ corresponds to $\mu$ under the bijection above. On the other hand, we also know by the discussion of Section \ref{sec:equiv-coh} that the weight $\mu$ subspace of $H_T^\bullet(G(n,m)$ is $H_{T,c}^\bullet(S_\mu, \underline{\ZZ}_{G(n,m)}[\dim G(n,m)])$, which is spanned by the equivariant cohomology class of $\overline{\Omega_\mu}$. We will denote this class by $\sigma_\lam$, where again $\lam$ corresponds to $\mu$ under the above bijection.

Since representation isomorphisms preserve weight spaces, and $\pm 1$ are the only units in $R_T$, it follows that $\Phi$ sends $s_\lam$ to $\pm \sigma_\lam$ for all $\lam$.

Note also that, by Schur's lemma, $\Phi$ is uniquely determined up to a unit in $R_T$, i.e. up to a sign. We can fix this sign by requiring that $\Phi$ preserves multiplicative identities, i.e. $\Phi(s_\varnothing)=\sigma_\varnothing$.

\section{Recovering the ring structure} \label{sec:ring-struct}

\subsection{Equivariant cohomology of $\Gr$}

By the Tannakian formalism \cite{DM}, the Lie algebra $\gf^\vee_{R_T}$ of $G^\vee$ over $R_T$ can be identified with the natural endomorphisms $(\phi_\Fc)_{\Fc\in\Pc}$ of the fiber functor $H_T^\bullet(-) : \Pc \to \Mod_{R_T}$ that satisfy

\begin{equation} \label{eq:lie-alg-condition}
\phi_{\Fc_1 * \Fc_2}=1 \otimes \phi_{\Fc_1} + \phi_{\Fc_1} \otimes 1 
\end{equation}
\cite[5.3]{YZ}. 

We now consider the cohomology ring $H_T^\bullet(\Gr)$. Since $\Gr$ is homeomorphic to the group of based polynomial loops in $G$, there is a natural Hopf algebra structure on this ring. Any element of $H_T^\bullet(\Gr)$ defines by cup-product a natural endomorphism of $H_T^\bullet(-)$, and, because of \cite[Propostion 2.7]{YZ}, equation (\ref{eq:lie-alg-condition}) is satisfied for primitive elements of $H_T^\bullet(\Gr)$, so we have a map

\begin{equation} \label{eq:cohomology-to-lie-alg}
    H^\bullet_T(\Gr)^\text{prim} \to \gf_{R_T}^\vee.
\end{equation}

Consider the equivariant line bundle $\Ls$ on $\Gr$ whose fiber over a lattice $L$ is 

\[\det\left(\frac{L}{L\cap L_0} \right) \otimes \det\left(\frac{L_0}{L\cap L_0} \right)^*,\]
where $\det$ of a finite dimensional $\CC$-vector space means the top exterior power.
By \cite[Lemma 5.1]{YZ}, the element $e_T=c_1^T(\Ls)\in H^2_T(\Gr)$ is primitive, hence defines an element of the Lie algebra $\gf_{R_T}^\vee$, which we also denote by $e_T$.

If we identify $G^\vee_{R_T}$ with the automorphism group of the $R_T$ -module $V$, then its Lie algebra $\gf^\vee_{R_T}$ can be identified with $\End_{R_T}(V)$.

\begin{proposition}
    Under the identification $\gf_{R_T}^\vee=\End_{R_T}(V)$, $e_T$ is the multiplication by $-x$ operator.
\end{proposition}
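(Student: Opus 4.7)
By the Tannakian setup, the natural endomorphism of the fiber functor $H_T^\bullet(-)$ associated to $e_T \in H_T^2(\Gr)$ is given on $H_T^\bullet(\Fc)$ by cup product with $e_T = c_1^T(\Ls)$. Applying this to $\Fc = \IC_{\omega_1}$ identifies the action of $e_T$ on $V = H_T^\bullet(G(1,m))$ with cup product by the restriction $c_1^T(\Ls)|_{G(1,m)} \in H_T^2(G(1,m))$. Under the ring isomorphism $V \cong R_T[x]/\prod_{i=1}^m(x+t_i)$, cup product is just multiplication, so it suffices to prove that this restriction equals $-x$ in $V$.

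The first step is to identify $\Ls|_{G(1,m)}$ geometrically. Any lattice $L$ of type $\omega_1$ admits a Smith basis $tb_1, b_2, \ldots, b_m$ with $b_i$ a basis of $L_0$, so in particular $L \subset L_0$ and hence $L \cap L_0 = L$. The defining expression for the fiber of $\Ls$ then collapses to $\Ls|_L = \det(L_0/L)^* = (L_0/L)^*$. Under the identification of $\Gr^{\omega_1}$ with the Grassmannian of $1$-dimensional quotients of $L_0/tL_0$, the assignment $L \mapsto L_0/L$ is the tautological quotient line bundle, so $\Ls|_{G(1,m)}$ is its dual.

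The second step is to compute $c_1^T$ of this dual bundle and match it with $-x$. Since $H^\bullet_T(\PP^{m-1})$ is a free $R_T$-module and the $T$-fixed points $L_{e_1}, \ldots, L_{e_m}$ are isolated, the restriction map to the fixed locus is injective, so it is enough to check equality of restrictions at each $L_{e_k}$. The fiber of $\Ls|_{G(1,m)}$ at $L_{e_k}$ is $(\CC[b_k])^*$, carrying a $T$-character opposite to that of the tautological quotient line at that point; on the other hand, the relation $\prod(x+t_i) = 0$ together with the paper's identification of $\overline{\Omega_{e_k}}$ with $(x|\ts)^{k-1}$ forces $x|_{L_{e_k}}$ to the unique value compatible with these conventions, from which $-x|_{L_{e_k}}$ agrees with the character of $(\CC[b_k])^*$.

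The conceptual content is essentially already present in Step~1; the main obstacle is purely bookkeeping, namely keeping the sign conventions for equivariant Chern classes, the $T$-action on $L_0$ versus on $(L_0/tL_0)^*$, and the choice of identification $G^\vee \cong GL_m$ at the end of Section \ref{sec:geometric-satake} all consistent with those fixed in Section \ref{sec:equiv-coh}. A global sign error in any of these would flip the statement between $x$ and $-x$, so the crucial point is to verify sign consistency with the earlier normalizations rather than to carry out any nontrivial computation.
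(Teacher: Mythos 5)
Your Step 1 is exactly the content of the paper's (very short) proof, and you justify it more carefully than the paper does: for $L$ of type $\omega_1$ one has $L\subset L_0$, the fiber of $\Ls$ collapses to $(L_0/L)^*$, and $L\mapsto L_0/L$ is the tautological quotient line bundle under the paper's identification of $\Gr^{\omega_1}$ with $1$-dimensional quotients of $L_0/tL_0$. At that point you are essentially done and should stop: the dual of the tautological quotient line bundle is $\Oc(-1)$, and since Section \ref{sec:equiv-coh} \emph{defines} $x$ to be $c_1^T(\Oc(1))$, you get $c_1^T(\Ls)|_{G(1,m)}=c_1^T(\Oc(-1))=-x$ with no further computation; cup product by $-x$ is multiplication by $-x$ in $V=R_T[x]/\prod_i(x+t_i)$. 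This is precisely the paper's argument.

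Your Step 2 is where the proposal weakens. The fixed-point verification is never actually carried out: you do not compute the $T$-character of $(\CC b_k)^*$, nor the values $x|_{L_{e_k}}$, and you explicitly defer the reconciliation of the inverse-transpose convention from the footnote in Section \ref{sec:geometric-satake} with the normalization $x=c_1^T(\Oc(1))$ and with the identification of $\overline{\Omega_{e_k}}$ with $(x|\ts)^{k-1}$. Since the entire content of the proposition is the sign ($x$ versus $-x$), a step that ends with ``the crucial point is to verify sign consistency'' without verifying it does not close the argument. As written, Step 2 is both incomplete and unnecessary; replace it with the one-line observation above and the proof is complete and coincides with the paper's.
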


\begin{proof}
    The line bundle $\Ls$ restricts to the tautological line bundle $\Oc(-1)$ on $\PP^{m-1}$, so $e_T=c_1^T(\Ls)$ acts via multiplication by $c_1^T(\Oc(-1))=-c_1^T(\Oc(1))=-x$ on $V=H_T^\bullet(\PP^{m-1})$.
\end{proof}

Since $H^\bullet_T(\Gr)$ is commutative, the image of (\ref{eq:cohomology-to-lie-alg}) must lie inside the centralizer $\Zc_{e_T}$ of $e_T$ inside $\End_{R_T}(V)$. Since $-x$ generates $V$ as an $R_T$-algebra, the $R_T$-linear endomorphisms of $V$ that commute with multiplication by $-x$ are exactly the operators of multiplication by some element of $V$. Hence we can identify $\Zc_{e_T}$ with $V$, acting on itself by multiplication.

In fact, after changing coefficients to a field, it turns out that (\ref{eq:cohomology-to-lie-alg}) is an isomorphism onto $\Zc_{e_T}$ \cite[Corollary 5.3.2]{Gin}\footnote{The paper \cite{Gin} works with $\CC$ coefficients, but it is clear that being an isomorphism over $\QQ$ or over $\CC$ is equivalent.}:

\begin{theorem} \label{thm:prim-iso-centralizer}
    The natural map 
        \[ H^\bullet_T(\Gr, \QQ)^\text{prim} \to \Zc_{e_T} \otimes \QQ\]
    is an isomorphism, and $H^\bullet_T(\Gr,\QQ)^\text{prim}$ freely generates $H^\bullet_T(\Gr,\QQ)$, so there is an induced isomorphism 
    \[ H^\bullet_T(\Gr, \QQ) \to U(\Zc_{e_T} \otimes \QQ).\]
\end{theorem}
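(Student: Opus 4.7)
The plan is to combine two ingredients: a relative Milnor--Moore theorem to relate $H^\bullet_T(\Gr, \QQ)$ to its primitives, and a specialization argument (setting $t_i=0$) to identify the primitives with $\Zc_{e_T} \otimes \QQ$ via Ginzburg's non-equivariant result.

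First, I would exploit the Hopf algebra structure. The affine Grassmannian is homotopy equivalent to the based loop space of a compact form of $G$, endowing $H^\bullet(\Gr, \QQ)$ with the structure of a connected graded cocommutative Hopf algebra. This structure is compatible with the $T$-action, so $H^\bullet_T(\Gr, \QQ)$ becomes a graded cocommutative Hopf algebra over $R_T\otimes \QQ$. Because $\Gr$ admits a cell decomposition into Schubert cells of even real dimension, it is equivariantly formal, and $H^\bullet_T(\Gr, \QQ)$ is a free $R_T\otimes\QQ$-module. A relative version of the Milnor--Moore theorem over a $\QQ$-algebra base, valid for cocommutative Hopf algebras concentrated in even degrees, then yields $H^\bullet_T(\Gr, \QQ) \cong \Sym_{R_T\otimes\QQ}\bigl(H^\bullet_T(\Gr, \QQ)^{\text{prim}}\bigr)$, which coincides with $U\bigl(H^\bullet_T(\Gr,\QQ)^{\text{prim}}\bigr)$ since the primitives, living in even degrees of a graded-commutative algebra, form an abelian Lie subalgebra.

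Second, I would show that the natural map $H^\bullet_T(\Gr, \QQ)^{\text{prim}} \to \Zc_{e_T}\otimes\QQ$ is an isomorphism. We have already seen that it factors through $\Zc_{e_T}\otimes\QQ$. By equivariant formality, both sides are finitely generated free $R_T\otimes\QQ$-modules. Setting $t_1=\ldots=t_m=0$ specializes the map to the non-equivariant version $H^\bullet(\Gr, \QQ)^{\text{prim}} \to \Zc_e\otimes\QQ$, which is an isomorphism by Ginzburg's theorem (with $e$ the specialization of $e_T$, a regular nilpotent). A Nakayama-type argument, carried out degree-by-degree in the cohomological grading so that each graded piece is a finitely generated module over the Noetherian local ring $(R_T\otimes\QQ)_{(t_1,\ldots,t_m)}$, then promotes the fiberwise isomorphism to an isomorphism over $R_T\otimes\QQ$ itself.

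The main obstacle is confirming that $\Zc_{e_T}\otimes\QQ$ is free of the correct rank over $R_T\otimes\QQ$, so that the dimension count on both sides of the specialized map matches and Nakayama can be applied. For $G=GL_m$ this is transparent: $\Zc_{e_T}$ was identified with $V=R_T[x]/\prod_{i=1}^m(x+t_i)$, a free $R_T$-module of rank $m$, matching Ginzburg's count of primitive generators in the non-equivariant case. With this rank match secured, the specialization argument closes, and the combination of the two steps delivers the isomorphism $H^\bullet_T(\Gr, \QQ) \to U(\Zc_{e_T}\otimes\QQ)$.
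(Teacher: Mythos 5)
The paper does not actually prove this theorem: it quotes the equivariant statement directly from Ginzburg (\cite[Corollary 5.3.2]{Gin}), adding only the remark that an isomorphism over $\CC$ descends to $\QQ$ by flat base change. Your proposal is therefore a genuinely different route, namely an attempt to bootstrap the equivariant statement from the non-equivariant one via equivariant formality, a relative Milnor--Moore theorem, and a Nakayama/specialization argument. This is the right general strategy (it is essentially how such equivariant extensions are handled in Yun--Zhu and Bezrukavnikov--Finkelberg), and your rank count for $\Zc_{e_T}\cong V$ as a free $R_T$-module of rank $m$ is correct and does match the non-equivariant centralizer of a regular nilpotent in $\gl_m$.

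However, the argument has a gap precisely at its crux. The specialization step assumes that forming primitives commutes with the base change $R_T\otimes\QQ\to\QQ$, i.e.\ that $H^\bullet_T(\Gr,\QQ)^{\text{prim}}\otimes_{R_T\otimes\QQ}\QQ$ surjects onto $H^\bullet(\Gr,\QQ)^{\text{prim}}$. Primitives are defined as the kernel of $\Delta - 1\otimes\mathrm{id}-\mathrm{id}\otimes 1$, so they are only left exact under base change; the surjectivity you need is exactly the statement that every non-equivariant primitive class admits a \emph{primitive} equivariant lift, which is the nontrivial content of the theorem and is asserted rather than proved. The same issue undermines the ``relative Milnor--Moore theorem over $R_T\otimes\QQ$'': there is no off-the-shelf Milnor--Moore for Hopf algebras over a non-field base, and the usual way to establish $H^\bullet_T(\Gr,\QQ)\cong\Sym(H^\bullet_T(\Gr,\QQ)^{\text{prim}})$ is to first produce a free module of primitives of the correct rank --- so the two halves of your argument feed into each other circularly. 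A repair would be to construct explicit primitive equivariant lifts (e.g.\ via equivariant Chern classes of determinant-type line bundles on $\Gr$, or via localization to $T$-fixed points), after which your Nakayama argument does close; alternatively, one can simply cite the equivariant result as the paper does. (A smaller wrinkle you should also address: $\Gr_{GL_m}$ has $\pi_0\cong\ZZ$, so $H^0$ is infinite-dimensional and ``graded connected'' fails; one must treat the degree-zero primitive coming from the central copy of $\ZZ$ separately or work component by component.)
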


\subsection{The main theorem}

We now have the tools to prove that $\Phi$ preserves multiplication.

\begin{theorem} \label{thm:ring-hom}
    The isomorphism of representations $\Phi : \Lambda_{n,m} \to H_T^\bullet(G(n,m))$ is a ring isomorphism.
\end{theorem}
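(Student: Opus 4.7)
My plan is to show that $\Phi$ preserves multiplication by first passing to $\QQ$-coefficients and observing that the ring structure on each side is recovered from the action of $U(\Zc_{e_T})_\QQ$, under which $\Phi$ is automatically equivariant. Since $\Zc_{e_T}$ is abelian (being identified with $V$ acting on itself by multiplication), this enveloping algebra is simply $\Sym V_\QQ$. Combining the $\Sym V_\QQ$-equivariance of $\Phi$ with the normalization $\Phi(s_\varnothing) = \sigma_\varnothing$ will give the ring-homomorphism property over $\QQ$, and torsion-freeness will handle the descent to $R_T$.

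The key computation is that for $v \in V$, the action of $v \in \Zc_{e_T} \subset \gf^\vee_{R_T}$ on $\Lambda_{n,m}$ is given by multiplication by the symmetric polynomial $P_v(\xs) := \sum_{j=1}^n v(x_j)$. Indeed, $v$ (viewed as the multiplication operator on $V$) acts on $\bigwedge^n V$ as the derivation $f_1 \wedge \cdots \wedge f_n \mapsto \sum_i f_1 \wedge \cdots \wedge v f_i \wedge \cdots \wedge f_n$, and transferring this through the identification $\Lambda_{n,m}^\sgn \cong \bigwedge^n V$ produces $P_v(\xs)$ times the original skew-symmetric polynomial; dividing by $a_\rho$ then yields multiplication by $P_v(\xs)$ on $\Lambda_{n,m}$.

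Extending to $\Sym V_\QQ$ as an abelian enveloping algebra, every $u \in \Sym V_\QQ$ therefore acts on $\Lambda_{n,m} \otimes \QQ$ by multiplication by $u \cdot s_\varnothing$, and the evaluation map $u \mapsto u \cdot s_\varnothing$ is a surjective $(R_T\otimes\QQ)$-algebra map $\Sym V_\QQ \twoheadrightarrow \Lambda_{n,m}\otimes \QQ$ (surjectivity: its image contains $P_{(x|\ts)^k}(\xs) = p_k(\xs) + (\text{lower})$, and after clearing the lower terms inductively we recover the power sums $p_1, \ldots, p_{m-1}$, which generate $\Lambda_n \otimes \QQ$ whenever $m > n$; the case $m = n$ is trivial since $G(n,n)$ is a point). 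The same statement holds on the geometric side: by Theorem~\ref{thm:prim-iso-centralizer} and the Tannakian construction of $\gf^\vee_{R_T}$ from primitive classes, the $\Sym V_\QQ$-action on $H_T^\bullet(G(n,m))\otimes\QQ$ coincides with the cup-product action of $H_T^\bullet(\Gr,\QQ)$, whence every $u \in \Sym V_\QQ$ acts by cup product with $u \cdot \sigma_\varnothing$.

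For arbitrary $a, b \in \Lambda_{n,m}\otimes\QQ$, I pick $u \in \Sym V_\QQ$ with $u \cdot s_\varnothing = a$; then $a \cdot b = u \cdot b$ in $\Lambda_{n,m}\otimes\QQ$, and
\[\Phi(a \cdot b) = \Phi(u \cdot b) = u \cdot \Phi(b) = (u \cdot \sigma_\varnothing) \cup \Phi(b) = \Phi(a) \cup \Phi(b),\]
so $\Phi \otimes \QQ$ is a ring isomorphism. Since $H_T^\bullet(G(n,m))$ is a free $R_T$-module it embeds into its rationalization, so the equation $\Phi(a\cdot b) = \Phi(a)\cup \Phi(b)$ descends to an equality over $R_T$. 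The main obstacle is the explicit computation of the $V$-action on $\Lambda_{n,m}$ as multiplication by a power sum: this is the only nontrivial bridge between the abstract representation-theoretic action and the concrete ring structure on symmetric polynomials, and once it is in hand, the matching statement on the cohomology side is a formal consequence of Theorem~\ref{thm:prim-iso-centralizer} and the Tannakian setup.
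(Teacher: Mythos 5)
Your proposal is correct and follows essentially the same route as the paper: reduce to $\QQ$-coefficients, show that $\Zc_{e_T}\cong V$ acts on $\Lambda_{n,m}$ by multiplication by $\sum_j v(x_j)$ and on $H_T^\bullet(G(n,m))$ by cup product with the restriction of the corresponding class, and then use surjectivity of the evaluation map $\Sym V_\QQ \to \Lambda_{n,m}\otimes\QQ$ (via power sums) to force $\Phi$ to intertwine the two ring structures. Your explicit ``pick $u$ with $u\cdot s_\varnothing = a$'' computation is just an unwinding of the paper's commutative triangle through $U(\Zc_{e_T}\otimes\QQ)$, and your care about only having $p_1,\dots,p_{m-1}$ available is a harmless refinement.
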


\begin{proof}
    First, since both sides are free $\ZZ$-modules, it's enough to show that $\Phi$ is a ring homomorphism after tensoring with $\QQ$. Recall also that we chose $\Phi$ so that it preserves the multiplicative identities.
    
    Since $\Phi$ is $\gf_{R_T}^\vee$-equivariant, it is in particular $\Zc_{e_T}$-equivariant. Moreover, we claim that any $f\in \Zc_{e_T}$ acts on both the domain and codomain of $\Phi$ via multiplication by $f\cdot 1$ (using the ring structure on the domain and codomain of $\Phi$). For $H_T^\bullet(G(n,m))$, this is because of Theorem \ref{thm:prim-iso-centralizer} and the fact that a class $\alpha \in  H^\bullet_T(\Gr)$ acts via multiplication by $\alpha|_{G(n,m)}$. For $\Lambda_{n,m}$, we see from the rule \[X \cdot (v_1 \wedge \ldots \wedge v_n) = \sum_{i=1}^n v_1 \wedge \ldots \wedge Xv_i \wedge \ldots \wedge v_n \]
    for the action of a Lie algebra on wedge powers that, for any polynomial $f\in V\cong \Zc_{e_T}$, $f$ acts on $\Lambda_{n,m}^\sgn$ (hence also on $\Lambda_{n,m}$) via multiplication by $f(x_1)+\ldots+f(x_n)$.
    
    We have therefore (after tensoring with $\QQ$) a commutative diagram
    \begin{center}
    \begin{tikzcd}
                                             & {U(\Zc_{e_T}\otimes \QQ)} \arrow[ldd] \arrow[rdd] &                       \\
                                             &                                   &                       \\
    {\Lambda_{n,m}}\otimes \QQ \arrow[rr, "\Phi_\QQ"] &                                   & {H_T^\bullet(G(n,m),\QQ)}
    \end{tikzcd}
    \end{center}
    where $U(\Zc_{e_T}\otimes \QQ)=\Sym_{R_T\otimes \QQ}^\bullet (\Zc_{e_T}\otimes \QQ)$ is the universal enveloping algebra of $\Zc_{e_T}\otimes \QQ$, seen as an abelian Lie algebra over $R_T\otimes \QQ$, and the two oblique arrows are given by $f \mapsto f \cdot 1$.

    The two vertical arrows are ring homomorphisms, and the left vertical arrow is surjective because the power sum symmetric functions $x_1^k + \cdots + x_n^k$ generate $R[x_1,\ldots,x_n]^{S_N} \otimes \QQ$ (and therefore its quotient $\Lambda_{n,m}\otimes \QQ$) as an $R \otimes \QQ$-algebra. This implies that $\Phi_\QQ$ itself is a ring isomorphism.

\end{proof}

\section{Getting the signs right} \label{sec:sgn}

We constructed a ring isomorphism $\Phi : \Lambda_{n,m} \to H_T^\bullet(G(n,m))$ which sends $s_\lam(\xs| \ts)$ to $\pm \sigma_\lam$. We now tackle the delicate problem of proving that the sign is in fact always positive.

The main geometric input we need is the following positivity result for the multiplication of Schubert classes \cite[Chapter 19, Theorem 3.1]{AF}.

\begin{theorem} \label{thm:positivity}
    The structure constants $c_{\lam\mu}^\nu$ in the expansion
    \[\sigma_\lam \cdot \sigma_\nu = \sum_\nu c_{\lam\mu}^\nu \sigma_\nu\]
    lie in $\NN[t_1-t_2, \ldots, t_{n-1}-t_n]\subset R_T$.
\end{theorem}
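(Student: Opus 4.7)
The plan is to reduce to Graham's general positivity theorem for equivariant cohomology of a full flag variety and then transfer it to the Grassmannian. Under our identification of $T$ with the standard torus of $GL_m$, the simple roots of $GL_m$ restrict to $t_i - t_{i+1}$, so it suffices to prove the following: for any complex reductive group $G$ with Borel $B$ and maximal torus $T$, the structure constants of the equivariant Schubert basis of $H^\bullet_T(G/B)$ lie in $\NN[-\alpha_1, \ldots, -\alpha_r]$, where $\alpha_1, \ldots, \alpha_r$ are the simple roots. The crucial technical input is what I would call the \emph{positive expansion lemma}: for any $B$-invariant irreducible closed subvariety $Z \subset G/B$, the equivariant class $[Z]_T$ expands in the Schubert basis with coefficients in $\NN[-\alpha_1, \ldots, -\alpha_r]$.

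Granted this lemma, the theorem follows by an equivariant moving argument. Non-equivariantly, $\sigma_\lam \cdot \sigma_\mu$ is represented by $X_\lam \cap gX_\mu^{\mathrm{op}}$ for generic $g \in G$; equivariantly one cannot translate by an arbitrary $g$, but one can apply a sufficiently generic one-parameter subgroup of the unipotent radical to degenerate $X_\mu^{\mathrm{op}}$ to a union of $B$-invariant subvarieties. Each irreducible component of this limiting scheme, weighted by its multiplicity, contributes a positive combination of Schubert classes by the lemma, and summing yields the claimed expansion.

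To prove the lemma I would induct on $\dim Z$. The base case is a $T$-fixed point $p = wB$, whose equivariant class equals (up to the reindexing of roots by $w$) a product of negative simple roots, hence lies in $\NN[-\alpha_1, \ldots, -\alpha_r]$. For the inductive step, choose a $B$-equivariant resolution $\tilde Z \to Z$ (for instance a Bott-Samelson tower when $Z$ is a Schubert variety), compute the pushforward via Atiyah-Bott localization at $T$-fixed points, and use the fact that the $T$-weights on the normal bundle to any $B$-orbit closure sitting inside a larger $B$-orbit closure are themselves negative roots.

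The main obstacle is controlling the weights in the inductive step: equivariant intersection theory admits no unconstrained moving lemma, so one must track the $T$-weights appearing in every relevant normal bundle and verify that each is a nonpositive integer combination of simple roots. Once this weight bookkeeping is in place, the positivity of the final coefficients is a formal consequence of localization, and specialization to the Grassmannian $G(n,m) = GL_m / P_n$ (where the Schubert classes pull back from classes in $GL_m / B$) gives the statement of the theorem.
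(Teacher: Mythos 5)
The paper does not actually prove this statement: it is imported as a black box from the literature (Graham's equivariant positivity theorem, in the form of Anderson--Fulton, Chapter~19, Theorem~3.1), and it is flagged as the one genuinely geometric input of Section~\ref{sec:sgn}. You are therefore attempting something the paper deliberately outsources, and while your sketch gestures at Graham's actual strategy (degeneration along one-parameter unipotent subgroups in place of a moving lemma), it has two serious gaps. First, your ``positive expansion lemma'' is vacuous as stated: an irreducible closed $B$-invariant subvariety of $G/B$ is a finite union of $B$-orbits, hence equals the closure of its dense orbit, i.e.\ it already \emph{is} a Schubert variety, so the lemma asserts nothing. The statement that carries the content (and that Graham proves) concerns subvarieties invariant only under a \emph{smaller} connected solvable group $B'$ with unipotent radical $U'$: their classes expand in the classes of subvarieties invariant under the larger group, with coefficients in $\NN[\chi_1,\dots,\chi_d]$ where the $\chi_i$ are the weights of $T$ on $\operatorname{Lie}(U')$.

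Second, your sketch never accounts for where the polynomials in the roots come from. When you degenerate along $U_\chi\cong\mathbb{G}_a$, the flat limit $Y_\infty$ does \emph{not} represent the same equivariant class as $Y$: compactifying the family over $\PP^1$ yields $[Y]=[Y_\infty]+\chi\cdot[Z]$ for an effective $T$-invariant correction cycle $Z$, and it is exactly this term, handled by induction on the dimension of the unipotent group, that produces the nonnegative polynomials in the simple roots. Your write-up treats the limit cycle as representing the same class and then declares positivity ``a formal consequence of localization,'' which it is not: localization introduces denominators and signs (the tangent weights at the fixed points of $G/B$ include both positive and negative roots), and no known proof of Graham positivity proceeds that way. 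Finally, watch the sign of your reduction: with $\alpha_i=t_i-t_{i+1}$ you land in $\NN[-\alpha_1,\dots,-\alpha_r]=\NN[t_2-t_1,\dots]$, whereas the theorem asserts membership in $\NN[t_1-t_2,\dots]$; which of the two is correct depends on whether one expands the classes of Schubert or opposite Schubert varieties (the paper uses the $\overline{\Omega_\mu}$), and your argument does not pin this convention down.
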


We will prove that the signs are positive by considering the iterated action of $-e_T\in H^2(\Gr)^\text{prim} \subset \gf^\vee_{R_T}$. The following two lemmas describe more explicitly how $e_T$ acts on both sides of $\Phi$.

\begin{lemma} \label{lem:res-eT}
    The restriction of $e_T$ to $G(n,m)$ is $-\sigma_1+t_1+\ldots + t_n$.
\end{lemma}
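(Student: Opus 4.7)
The plan is to identify $\Ls$ concretely on $G(n,m)$, extract $e_T|_{G(n,m)}$ as an equivariant first Chern class, and translate into the Schubert basis by localization at a single well-chosen $T$-fixed point.

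First, I will compute $\Ls|_{G(n,m)}$. Since every $L\in\Gr^{\omega_n}$ satisfies $L\subset L_0$, we have $L\cap L_0=L$, so the defining fiber of $\Ls$ simplifies to $\det(0)\otimes(\det L_0/L)^*=(\det Q)^*|_L$, where $Q$ denotes the tautological rank-$n$ quotient bundle on $G(n,m)$. Hence $e_T|_{G(n,m)}=-c_1^T(Q)$, and the lemma reduces to the identity $c_1^T(Q)=\sigma_1-(t_1+\cdots+t_n)$ in $H_T^2(G(n,m))$; this specializes for $n=1$ to $\Ls|_{\PP^{m-1}}=\Oc(-1)$ as in the proposition above.

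To prove this identity, I will use that the degree-2 part of $H_T^\bullet(G(n,m))$ is freely spanned over $\ZZ$ by $\sigma_1$ (the unique Schubert class of degree 2) together with $t_1,\ldots,t_m$, so I may write $c_1^T(Q)=a\sigma_1+\sum_i b_i t_i$ with $a,b_i\in\ZZ$. Setting all $t_i=0$ recovers the non-equivariant identity $c_1(Q)=\sigma_1$ (both equal the hyperplane class under the Pl\"ucker embedding), giving $a=1$. To pin down the $b_i$ I will restrict to the base fixed point $p_{\omega_n}$. On one hand $\sigma_1|_{p_{\omega_n}}=0$: writing $\sigma_1=[\overline{\Omega_{\mu_1}}]$ with $\mu_1-\omega_n=e_{n+1}-e_n$ a negative coroot, we have $\omega_n\not\leq\mu_1$ and so $p_{\omega_n}\notin\overline{\Omega_{\mu_1}}$. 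On the other hand $Q|_{p_{\omega_n}}=\bigoplus_{k=1}^n \CC\bar e_k$, and calibrating the $T$-weight conventions against the relation $\prod_i(x+t_i)=0$ from Section~\ref{sec:equiv-coh} forces the weight of $\bar e_k$ to be $-t_k$, so the weights sum to $-(t_1+\cdots+t_n)$. These two restrictions force $\sum_i b_i t_i=-(t_1+\cdots+t_n)$, so $b_i=-1$ for $i\leq n$ and $b_i=0$ otherwise, which yields the desired identity.

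The main subtlety will be handling the sign conventions for the $T$-action on $\CC^m=L_0/tL_0$: the paper fixes this convention only implicitly via its normalization of the isomorphism $G^\vee\cong GL_m$, and one has to cross-check against a known formula such as $\prod(x+t_i)=0$ on $\PP^{m-1}$ to get the signs at $p_{\omega_n}$ compatible. Apart from this, every step is elementary, and a single fixed point suffices because the ambiguity $c_1^T(Q)-\sigma_1$ lies in the submodule $R_T\cdot\sigma_\varnothing$ on which restriction to any fixed point is the identity.
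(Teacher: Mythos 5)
Your argument is correct, and after the shared first step it diverges from the paper's. Both proofs begin by restricting $\Ls$ to $G(n,m)$ and identifying it with the dual of the determinant of the tautological quotient bundle (the paper phrases this as $\bigwedge^n\Sc$ for the tautological subbundle $\Sc=Q^*$ in its dual-space convention), so that the lemma becomes the identity $c_1^T(Q)=\sigma_1-(t_1+\cdots+t_n)$. The paper then finishes in one geometric stroke: it exhibits an explicit $T$-equivariant bundle map $\bigwedge^n\Sc\to\bigwedge^n\Oc^n$ induced by projection onto the first $n$ coordinates, observes that its zero locus is precisely $\overline{\Omega_{(1)}}$, and reads off $\sigma_1=c_1^T\bigl((\bigwedge^n\Sc)^*\otimes\bigwedge^n\Oc^n\bigr)=-e_T|_{G(n,m)}+t_1+\cdots+t_n$; the term $t_1+\cdots+t_n$ appears as $c_1^T(\bigwedge^n\Oc^n)$, so the weight bookkeeping is absorbed into the section argument. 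You instead expand $c_1^T(Q)$ in the basis $\{\sigma_1,t_1,\dots,t_m\}$ of $H_T^2$, pin down the $\sigma_1$-coefficient by the non-equivariant specialization $c_1(Q)=\sigma_1$, and pin down the $R_T$-part by localizing at the single fixed point $p_{\omega_n}$, where $\sigma_1$ vanishes (correctly justified via $\omega_n\not\le\mu_1$) and the weights of $Q$ sum to $-(t_1+\cdots+t_n)$. Your route trades the paper's explicit section for two standard inputs (the classical $c_1(Q)=\sigma_1$ and the fixed-point weights), and your calibration of the $T$-weights of $L_0/tL_0$ against the relation $\prod_i(x+t_i)=0$ on $\PP^{m-1}$ is exactly the right way to resolve the sign convention the paper leaves implicit; the paper's version has the advantage of being self-contained and convention-independent, since both the identification of $\sigma_1$ and the torus weights fall out of the same zero-locus computation.
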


\begin{proof}
    Note that the restriction of $\Ls$ to $G(n,m)$ is the top exterior power of the tautological vector bundle $\Sc$, so
    \[ e_T|_{G(n,m)} = c_1^T\left(\bigwedge\nolimits^n \Sc\right).\]
    On the other hand there is an equivariant map of line bundles
    \[ \bigwedge\nolimits^n \Sc \to \bigwedge\nolimits^n \Oc^n\]
    (where $\Oc$ denotes the trivial line bundle on $G(n,m)$) induced by the quotient map $\Oc^m \to \Oc^n$ which forgets the last $m-n$ coordinates, and the zero set of this map is exactly the opposite Schubert variety corresponding the partition $(1,0, \ldots, 0)$. It therefore follows from basic properties of equivariant Chern classes \cite[Chapter 2, \S 3]{AF} that
    \[\sigma_1 = c_1^T\left(\left(\bigwedge\nolimits^n \Sc\right)^* \otimes \bigwedge\nolimits^n \Oc^n\right)=-c_1^T\left(\bigwedge\nolimits^n \Sc\right)+ c_1^T\left(\bigwedge\nolimits^n\Oc^n\right)=-e_T|_{G(n,m)}+t_1+\ldots+t_n\]
\end{proof}

The following Pieri rule for double Schur polynomials is standard and follows, for example, from the Littlewood-Richardson rule of \cite{MS}, but we include a proof here for completeness.

\begin{lemma}
\label{lem:pieri}
    For every partition $\lam$ with at most $n$ parts,
    \[(x_1+\ldots +x_n)\cdot s_\lam(\xs|\ts)=-\left(\sum_{i=1}^n t_{\lam_i+n-i+1}\right) s_\lam(\xs|\ts) + \sum_{\lam' \gtrdot \lam} s_{\lam'}(\xs|\ts),\]
    where the sum is over all $\lam'$ obtained by adding one box to the diagram of $\lam$.
\end{lemma}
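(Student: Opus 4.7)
The plan is to transfer the identity to the skew-symmetric side via the ``multiplication by $a_\rho$'' isomorphism of Proposition \ref{prop:vandermonde-mul-iso}, and then compute directly inside $\bigwedge^n R[x]$. Under that isomorphism the statement to prove becomes
\[ (x_1+\cdots+x_n)\cdot a_{\lam+\rho}(\xs|\ts) = -\Big(\sum_{i=1}^n t_{\lam_i+n-i+1}\Big) a_{\lam+\rho}(\xs|\ts) + \sum_{\lam'\gtrdot \lam} a_{\lam'+\rho}(\xs|\ts). \]

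First I would unwind the identification $\Lambda_n^\sgn \cong \bigwedge^n R[x]$: a direct reindexing of the defining sum for $a_\nu$ shows that, for any strictly decreasing $\nu$,
\[ a_\nu(\xs|\ts) \leftrightarrow (x|\ts)^{\nu_1} \wedge (x|\ts)^{\nu_2} \wedge \cdots \wedge (x|\ts)^{\nu_n}. \]
Next I would invoke the observation already recorded in the proof of Theorem \ref{thm:ring-hom}: for any $f \in R[x]$, multiplication by $f(x_1)+\cdots+f(x_n)$ on $\Lambda_n^\sgn$ corresponds, under the wedge identification, to the Leibniz extension of the operator ``multiplication by $f$'' on $R[x]$. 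Applied to $f(x)=x$, multiplication by $x_1+\cdots+x_n$ corresponds to the operator
\[ M_x\colon v_1\wedge\cdots\wedge v_n \mapsto \sum_{i=1}^n v_1 \wedge \cdots \wedge (x\cdot v_i) \wedge \cdots \wedge v_n. \]

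The computational heart of the argument is the single-variable expansion
\[ x\cdot (x|\ts)^k \;=\; (x+t_{k+1})(x|\ts)^k - t_{k+1}(x|\ts)^k \;=\; (x|\ts)^{k+1} - t_{k+1}(x|\ts)^k, \]
which rewrites $x\cdot(x|\ts)^k$ in the double-monomial basis. Substituting this into $M_x$ applied to the wedge $(x|\ts)^{\nu_1}\wedge\cdots\wedge(x|\ts)^{\nu_n}$ splits the result into two sums. The terms coming from the $-t_{k+1}(x|\ts)^k$ piece collect into the scalar $-\sum_i t_{\nu_i+1}$ times the original wedge, which in the variables $\nu_i=\lam_i+n-i$ is exactly $-\sum_i t_{\lam_i+n-i+1}$. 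The terms coming from the $(x|\ts)^{k+1}$ piece give, for each $i$, the wedge obtained by replacing $\nu_i$ with $\nu_i+1$; this wedge vanishes when $\nu_i+1=\nu_{i-1}$ (a repeated factor) and otherwise is $a_{\nu'}$ for a new strict sequence $\nu'$. The non-vanishing case is precisely when $\lam_i<\lam_{i-1}$, i.e.\ when row $i$ of $\lam$ can have a box added, and the resulting strict sequence is $\lam'+\rho$ for the corresponding $\lam'\gtrdot \lam$. Dividing through by $a_\rho$ yields the claimed identity.

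I do not expect a serious obstacle: the only slightly subtle points are correctly matching the bijection $\nu \leftrightarrow$ wedge (getting the order of the tensor factors right so the signs work) and checking that the ``shifted'' wedges vanish exactly when incrementing $\lam_i$ would violate the partition condition. Both are mechanical once the setup is in place.
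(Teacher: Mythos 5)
Your proposal is correct and follows essentially the same route as the paper: clear denominators by $a_\rho$, use the single-variable identity $x\cdot(x|\ts)^k=(x|\ts)^{k+1}-t_{k+1}(x|\ts)^k$, and observe that the shifted terms vanish exactly when $\nu_i+1=\nu_{i-1}$. The paper carries out the computation directly on the permutation-sum definition of $a_\nu$ rather than phrasing it via the Leibniz rule on $\bigwedge^n R[x]$, but this is only a notational difference.
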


\begin{proof}

Multiplying by $a_\rho$ on both sides, this is equivalent to showing that 
\[(x_1+\ldots +x_n)\cdot a_\nu(\xs|\ts)=-\left(\sum_{i=1}^n t_{\nu_i+1}\right) a_\nu(\xs|\ts) + \sum_{\nu' \gtrdot \nu} a_{\nu'}(\xs|\ts)\]

for all $\nu_1>\ldots>\nu_n\ge 0$, where the sum is over all $\nu'$ obtained from $\nu$ by increasing one of the $\nu_i$ by $1$ (if $\nu_i+1=\nu_{i-1}$, that term vanishes since $a_{\nu'}$ is then a determinant with two equal rows).

Using the identity $x_i\cdot (x_i|t)^k= (x_i|t)^{k+1}-t_{k+1}(x_i|t)^k$, we have 

\begin{align*}
    (x_1+\ldots +x_n)\cdot a_\nu(\xs|\ts) &= \sum_{j=1}^n \sum_{\sigma \in S_n} x_j \cdot \sgn(\sigma) \prod_{i=1}^n (x_{\sigma(i)}|\ts)^{\nu_i} \\
    &= \sum_{j=1}^n \sum_{\sigma \in S_n} x_{\sigma(j)}\sgn(\sigma) \prod_{i=1}^n (x_{\sigma(i)}|\ts)^{\nu_i} \\
    &= \sum_{j=1}^n \sum_{\sigma \in S_n} \sgn(\sigma) \left( -t_{\nu_j+1} \prod_{i=1}^n (x_{\sigma(i)}|\ts)^{\nu_i} + \prod_{i=1}^n (x_{\sigma(i)}|\ts)^{\nu_i+\delta_{ij}} \right)\\
    &= -\sum_{j=1}^n t_{\nu_j+1} a_\nu(\xs|\ts) + \sum_{\nu' \gtrdot \nu} a_{\nu'}(\xs|\ts),
\end{align*}
which is what we wanted to show.

\end{proof}

We can now prove

\begin{theorem}
    For every partition $\lam$ fitting in the $n$ by $m-n$ box, the map $\Phi$ sends $s_\lam(\xs|\ts)$ to $\sigma_\lam$.
\end{theorem}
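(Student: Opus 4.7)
My plan is to proceed by induction on $|\lam|$, writing $\Phi(s_\lam) = \eps_\lam \sigma_\lam$ with $\eps_\lam \in \{\pm 1\}$ and using the action of $e_T$ to show that adding a single box preserves the sign, so that the base case $\eps_\varnothing = 1$ (the chosen normalization) propagates to every $\lam$ fitting in the $n \times (m-n)$ rectangle. Since any such partition is reachable from $\varnothing$ by successive one-box additions, this reduces everything to controlling what happens to $\eps_\lam$ under $\lam \mapsto \lam'$ for $\lam' \gtrdot \lam$.

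The inductive mechanism is the following. The map $\Phi$ is $\gf^\vee_{R_T}$-equivariant, so in particular commutes with the action of $e_T$. By the proposition of Section \ref{sec:ring-struct}, $e_T$ acts on $V$ as multiplication by $-x$, hence on $\bigwedge\nolimits^n V$, and therefore on $\Lambda_{n,m}$, as multiplication by $-(x_1+\ldots+x_n)$. By Lemma \ref{lem:res-eT}, $e_T$ acts on $H_T^\bullet(G(n,m))$ by multiplication by $-\sigma_1 + t_1+\ldots+t_n$. Assuming the inductive hypothesis $\Phi(s_\lam) = \sigma_\lam$, the equivariance relation $\Phi(e_T \cdot s_\lam) = e_T \cdot \Phi(s_\lam)$, combined with the Pieri rule of Lemma \ref{lem:pieri}, yields after rearrangement
\[ \sum_{\lam' \gtrdot \lam} \eps_{\lam'}\, \sigma_{\lam'} \;=\; \sigma_1 \cdot \sigma_\lam \;+\; \Big( \sum_{i=1}^n t_{\lam_i + n - i + 1} - \sum_{j=1}^n t_j \Big)\, \sigma_\lam. \]

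Now expand $\sigma_1 \cdot \sigma_\lam = \sum_\nu c_{1,\lam}^\nu \sigma_\nu$ in the Schubert basis and compare the coefficient of $\sigma_{\lam'}$ on each side (for $\lam' \gtrdot \lam$, only the first term on the right contributes, since $\lam' \ne \lam$): this gives $\eps_{\lam'} = c_{1,\lam}^{\lam'}$. By Theorem \ref{thm:positivity}, $c_{1,\lam}^{\lam'}$ lies in the sub-semiring $\NN[t_1-t_2,\ldots,t_{n-1}-t_n] \subset R_T$. Since the differences $t_i - t_{i+1}$ are algebraically independent over $\ZZ$, the only element of this sub-semiring equal to $\pm 1$ is $1$ itself, so $\eps_{\lam'} = 1$, closing the induction.

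The delicate point to verify is the sign-tracking: the minus signs coming from $e_T = -x$ acting on $V$, from the formula $e_T|_{G(n,m)} = -\sigma_1 + t_1 + \ldots + t_n$ of Lemma \ref{lem:res-eT}, and from the Pieri rule of Lemma \ref{lem:pieri} must conspire so that the coefficient left to identify is $+c_{1,\lam}^{\lam'}$ rather than $-c_{1,\lam}^{\lam'}$; if the signs did not line up this way, Graham positivity would provide no information about $\eps_{\lam'}$, and the argument would collapse. This is the one step where sign conventions really have to be checked carefully, and it is precisely this alignment that makes positivity of equivariant Schubert structure constants the correct geometric input.
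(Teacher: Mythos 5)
Your argument is correct: the sign bookkeeping does work out, and the identity
\[ \sum_{\lam' \gtrdot \lam} \eps_{\lam'}\, \sigma_{\lam'} \;=\; \sigma_1 \cdot \sigma_\lam \;+\; \Big( \sum_{i=1}^n t_{\lam_i + n - i + 1} - \sum_{j=1}^n t_j \Big)\, \sigma_\lam \]
follows exactly as you say from $e_T$-equivariance, Lemma \ref{lem:res-eT} and Lemma \ref{lem:pieri} (with the understanding that in $\Lambda_{n,m}$ the Pieri sum is implicitly restricted to $\lam'$ fitting in the $n\times(m-n)$ box, which is all you need). You use the same ingredients as the paper, but organize them differently. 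The paper avoids induction: it applies $(-e_T)^k$ directly to the identity element, expands $(x_1+\cdots+x_n)^k$ in double Schur polynomials with leading coefficients $f_\lam$ (standard Young tableau counts, strictly positive) and $(\sigma_1 - t_1-\cdots-t_n)^k$ in Schubert classes with leading coefficients $f'_\lam \in \NN$, and concludes from $f'_\lam = \eps_\lam f_\lam$ that every $\eps_\lam = 1$ in one stroke. Your inductive version instead isolates a single equivariant Pieri coefficient at each step and deduces $\eps_{\lam'} = c_{1\lam}^{\lam'}$; since this constant has cohomological degree zero it is a nonnegative integer by Theorem \ref{thm:positivity} (your algebraic-independence argument also works, though comparing constant terms, or just degrees, is quicker), so it equals $1$. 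What your route buys is a slightly more local argument that only ever invokes positivity of one structure constant at a time, and it recovers the equivariant Pieri rule $c_{1\lam}^{\lam'}=1$ for $\lam'\gtrdot\lam$ as a byproduct; what the paper's route buys is the absence of an induction and a direct link to the normalization $\Phi(s_\varnothing)=\sigma_\varnothing$. Both are valid.
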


\begin{proof}

For each $\lam$, let $\eps_\lam\in \{\pm 1\}$ be such that $\Phi(s_\lam)=\eps_\lam \sigma_\lam$. 

Since $\Phi$ is $\gf^\vee_{R_T}$-equivariant and preserves multiplicative identities, we have

\begin{equation} \label{eq:xk-act-on-1}
    (-e_T)^k \cdot 1_{\Lambda_{n,m}} \overset{\Phi}{\longmapsto} (-e_T)^k \cdot 1_{H_T^\bullet(G(n,m))}
\end{equation}
for all $k\ge 0$. 
The left hand side is $(x_1+\ldots +x_n)^k$ (using that $-e_T$ acts on $V$ via multiplication by $x$, hence on $\Lambda_{n,m}$ via multiplication by $x_1+\ldots+x_n$) which, when expanded in the double Schur basis, gives by Lemma \ref{lem:pieri}
\[ \sum_{|\lam|=k} f_\lam s_\lam(\xs| \ts) + \text{terms in the $s_\lam$'s for $|\lam|<k$},\]

where $f_\lam$ is the numbers of chains of partitions going from $\varnothing$ to $\lam$ where a single box is added at each step (equivalently, the number of standard Young tableaux of shape $\lam$). 

Now, $-e_T$ acts on $H_T^\bullet(G(n,m))$ via multiplication by $-e_T|_{G(n,m)}=\sigma_1-t_1-\ldots-t_n$ (by Lemma \ref{lem:res-eT}). Hence the right hand side of (\ref{eq:xk-act-on-1}) is

\[\left(\sigma_1-t_1-\ldots-t_n\right)^k= \sum_{|\lam|=k} f'_\lam \sigma_\lam + \text{terms in the $\sigma_\lam$'s for $|\lam|<k$},\]

where $f'_\lam$ is the coefficient of $\sigma_\lam$ in the expansion of $\sigma_1^k$, which lies in $\NN[t_1-t_2, \ldots, t_{n-1}-t_n]\subset R_T$ by Theorem \ref{thm:positivity}, but in fact in $\NN$ for degree reasons.

Now, we see from (\ref{eq:xk-act-on-1}) that $f_\lam'=\eps_\lam f_\lam$ for all $\lam$ of size $k$. Since $f_\lam, f'_\lam$ are both nonnegative and $f_\lam$ is in fact strictly positive (every partition has at least one standard Young tableau), the signs $\eps_\lam$ must be positive. Since $k$ was arbitrary, this completes the proof.

\end{proof}

\section{Acknowledgements}

The author would like to thank Joel Kamnitzer for his supervision during this research. The research was supported by an Undergraduate Student Research Award from the Natural Sciences and Engineering Research Council of Canada (NSERC) supplemented by the Fonds de recherche du Qu\'ebec - Nature and Technologies (FRQNT).

\bibliography{refs.bib}
\bibliographystyle{plain}

\end{document}